\begin{document}

\title*{Time-Fractional Optimal Control\\ 
of Initial Value Problems on Time Scales\thanks{This is a preprint 
of a paper accepted for publication as a book chapter with 
Springer International Publishing AG. Submitted 23/Jan/2019; 
revised 27-March-2019; accepted 12-April-2019.}}

\author{Gaber M. Bahaa and Delfim F. M. Torres}

\authorrunning{G. M. Bahaa and D. F. M. Torres}

\institute{Gaber M. Bahaa 
\at Department of Mathematics and Computer Science, 
Faculty of Science, Beni-Suef University, Beni-Suef, Egypt;
and Department of Mathematics, Faculty of Science, Taibah University,
Al-Madinah Al-Munawarah, Saudi Arabia;
\email{Bahaa$_{-}$gm@yahoo.com}
\and 
Delfim F. M. Torres 
\at 
Center for Research and Development in Mathematics and Applications (CIDMA),
Department of Mathematics, University of Aveiro, 3810-193 Aveiro, Portugal;
\email{delfim@ua.pt}}

\maketitle


\abstract{We investigate Optimal Control Problems (OCP) 
for fractional systems involving fractional-time derivatives 
on time scales. The fractional-time derivatives and integrals
are considered, on time scales, in the Riemann--Liouville sense. 
By using the Banach fixed point theorem, sufficient conditions for
existence and uniqueness of solution to initial value problems
described by fractional order differential equations on time scales
are known. Here we consider a fractional OCP with a performance 
index given as a delta-integral function of both state 
and control variables, with time evolving on an arbitrarily 
given time scale. Interpreting the Euler--Lagrange 
first order optimality condition with an adjoint problem, 
defined by means of right Riemann--Liouville fractional 
delta derivatives, we obtain an optimality system for the 
considered fractional OCP. For that, we first prove new
fractional integration by parts formulas on time scales.\\[0.3cm]
\noindent {\bf Keywords}: fractional derivatives 
and integrals on time scales, initial value problems, 
optimal control.\\[0.2cm]
\noindent {\bf 2010 Mathematics Subject Classification}: 
26A33, 34N05, 49K99.}


\section{Introduction}

Let $\mathbb{T}$ be a time scale, that is,
a nonempty closed subset of $\mathbb{R}$.
We consider the following initial value problem:
\begin{equation}
\label{eq1}
\begin{gathered}
{_{t_{0}}^{\mathbb{T}}D}_{t}^{\alpha} y(t)=f(t,y(t)),
\quad t\in[t_{0}, t_{0}+a]=\mathcal{J}\subseteq\mathbb{T},
\quad 0< \alpha <1,\\
{_{t_{0}}^{\mathbb{T}}I}_{t}^{1-\alpha}y(t_{0})= 0,
\end{gathered}
\end{equation}
where ${_{t_{0}}^{\mathbb{T}}D}_{t}^{\alpha}$ is the (left)
Riemann--Liouville fractional derivative operator or order $\alpha$
defined on $\mathbb{T}$ and ${_{t_{0}}^{\mathbb{T}}I}_{t}^{1-\alpha}$
is the (left) Riemann--Liouville fractional integral operator or order
$1-\alpha$ defined on $\mathbb{T}$, as introduced in \cite{MyID:328}
(see also \cite{MyID:365,MyID:410}), and function $f:\mathcal{J}
\times\mathbb{R}\rightarrow \mathbb{R}$ is a right-dense continuous
function. Necessary and sufficient conditions for the existence
and uniqueness of solution to problem \eqref{eq1} are
already discussed in \cite{MyID:328}. Here, our goal 
is to prove optimality conditions for such systems.

Fractional Calculus (FC) is a generalization of classical calculus. 
It has been reported in the literature that systems described using
fractional derivatives give a more realistic behavior. There exists
many definitions of a fractional derivative. Commonly used
fractional derivatives are the classical Riemann--Liouville and
Caputo derivatives on continuous time scales. Fractional derivatives and
integrals of Riemann--Liouville and Caputo types have a vast
number of applications, across many fields of science and
engineering. For example, they can be used to model controllability,
viscoelastic flows, chaotic systems, Stokes problems,
thermo-elasticity, several vibration and diffusion processes,
bioengineering problems, and many other complex phenomena: see, e.g.,
\cite{Agra.1,Bah.7} and references therein. 

Fractional optimal control problems 
on a continuous time scale have attracted several authors in
the last two decades, and many techniques have been developed for
solving such problems, involving classical fractional
derivatives. Agrawal \cite{Agra.1,Agra.2} presented a general
formulation and proposed a numerical method to solve such problems.
In those papers, the fractional derivative was defined in the
Riemann--Liouville sense and the formulation was obtained by means
of a fractional variational principle and the Lagrange multiplier
technique. Using new techniques, Frederico and Torres 
\cite{MR2338631,Fre.1} obtained Noether-like theorems 
for fractional optimal control problems in both 
Riemann--Liouville and Caputo senses. In \cite{Mop.1,Mop.2}, 
Mophou and N'Gu\'{e}r\'{e}kata studied the fractional 
optimal control of diffusion equations involving
the classical Riemann--Liouville derivatives. In \cite{OZd.1},
Ozdemir investigated the fractional optimal control problem of a
distributed system in cylindrical coordinates whose dynamics are
defined in the classical Riemann--Liouville sense. For the state
of the art and many generalizations, see the recent books
\cite{MR3822307,MR3331286}.

The theory of fractional differential equations, specifically the
question of existence and uniqueness of solutions, is a research
topic of great importance \cite{ABN,E.Bajlekova,E.Hernandez}.
Another important area of study is dynamic equations on time scales,
which goes back to 1988 and the work of Aulbach and Hilger, and has
been used with success to unify differential and difference
equations \cite{ABRP,MR1062633,BP}.
Starting with a linear dynamic equation, Bastos et al. have
introduced the notion of fractional-order derivative on time scales,
involving time-scale analogues of Riemann--Liouville operators
\cite{BastosPhD,MR2728463,MyID:179}. Another approach originate from
the inverse Laplace transform on time scales \cite{MR2800417}. After
such pioneer work, the study of fractional calculus on time scales
developed in a popular research subject: see
\cite{MyID:330,BBT,MyID:320,MyID:324,MyID:358} and 
the more recent references \cite{MyID:413,MyID:379,MyID:422,MyID:403,MyID:425}.

To the best of our knowledge, the study of fractional optimal control problems 
for dynamical systems on time scales is under-developed, at least  
when compared to the continuous and discrete cases \cite{MR3718404,MR3726478}. 
Motivated by this fact, in this paper an Optimal Control Problem (OCP) for fractional 
initial value systems involving fractional-time
derivatives on time scales is considered. The fractional-time
derivative and integral are considered in the Riemann--Liouville
sense on time scales, as introduced in \cite{MyID:328}. 
We prove necessary optimality conditions for such OCPs.
The performance index of the Fractional Optimal Control Problem (FOCP) 
is considered as a non-autonomous delta integral 
of a function depending on state and control variables, 
and where the dynamic control system is expressed by a delta-differential system. 
Interpreting the Euler--Lagrange first order optimality condition 
with an adjoint problem, defined by means of the time-scale 
right fractional derivative in the sense of Riemann--Liouville, 
we obtain an optimality system for the FOCP on time scales. 


\section{Preliminaries}
\label{sec:prelim}

In this section, we collect notations, definitions, and results,
which are needed in the sequel. We use
$\mathcal{C}(\mathcal{J},\mathbb{R})$ for the Banach space of
continuous functions $y$ with the norm
$\|y\|_{\infty}=\sup\left\{|y(t)| : t\in\mathcal{J}\right\}$, where
$\mathcal{J}$ is a time-scale interval. 


\subsection{Time-scale essentials}

A time scale $ \mathbb{T}$ is an
arbitrary nonempty closed subset of $ \mathbb{R}$. The reader
interested on the calculus on time scales is referred to the books
\cite{BP,BP1}. For a survey, see \cite{ABRP}. Any time scale
$\mathbb{T}$ is a complete metric space with the distance
$d(t,s)=|t-s|$, $t,s\in\mathbb{T}$. Consequently, according to the
well-known theory of general metric spaces, we have for $\mathbb{T}$
the fundamental concepts such as open balls (intervals),
neighborhoods of points, open sets, closed sets, compact sets, etc.
In particular, for a given number $\delta>0$, the
$\delta$-neighborhood $\mathrm{U}_{\delta}(t)$ of a given point
$t\in\mathbb{T}$ is the set of all points $s\in\mathbb{T}$ such that
$d(t,s)< \delta$. We also have, for functions
$f:\mathbb{T}\rightarrow\mathbb{R}$, the concepts of limit,
continuity, and the properties of continuous functions on a general
complete metric space. Roughly speaking, the calculus on time scales
begins by introducing and investigating the concept of derivative
for functions $f :\mathbb{T}\rightarrow\mathbb{R}$. In the
definition of derivative, an important role is played by the
so-called jump operators. 

\begin{definition}
\label{def:jump:oper} (See \cite{BP1}). 
Let $\mathbb{T}$ be a time scale. 
For $t \in \mathbb{T}$, we define the forward jump
operator $\sigma:\mathbb{T}\rightarrow \mathbb{T}$ by
$\sigma(t):=\inf\{s \in\mathbb{T} : s > t\}$, and the backward jump
operator $\rho:\mathbb{T}\rightarrow \mathbb{T}$ by
$\rho(t):=\sup\{s \in\mathbb{T} : s < t\}$.
\end{definition}

\begin{remark}
In Definition~\ref{def:jump:oper}, we put $\inf \emptyset =\sup
\mathbb{T}$ (i.e., $\sigma(M)= M$ if $\mathbb{T}$ has a maximum $M$)
and $\sup \emptyset =\inf \mathbb{T}$ (i.e., $\rho(m)= m$ if
$\mathbb{T}$ has a minimum $m$).
\end{remark}

If $\sigma(t) > t$, then we say that $t$ is right-scattered;
if $\rho(t) < t$, then $t$ is said to be left-scattered.
Points that are simultaneously right-scattered and left-scattered
are called isolated. If $t < \sup\mathbb{T}$ and $\sigma(t) = t$,
then $t$ is called right-dense; if $t >\inf \mathbb{T}$ and $\rho(t)= t$,
then $t$ is called left-dense. The graininess function
$\mu :\mathbb{T}\rightarrow [0,\infty)$ is defined by
$\mu(t) :=\sigma(t) - t$.
The derivative makes use of the set $\mathbb{T}^{\kappa}$,
which is obtained from the time scale $\mathbb{T}$ as follows:
if $\mathbb{T}$ has a left-scattered maximum $M$, then
$\mathbb{T}^{\kappa}:=\mathbb{T} \setminus \{M\}$;
otherwise, $\mathbb{T}^{\kappa}:=\mathbb{T}$.

\begin{definition} (Delta derivative \cite{AB})
Assume $f:\mathbb{T}\rightarrow \mathbb{R}$ and let
$t\in \mathbb{T}^{\kappa}$. We define
$$
f^{\Delta}(t):=\lim_{s\rightarrow t}\frac{f(\sigma(s))-f(t)}{\sigma(s)-t},
\quad t \neq \sigma(s),
$$
provided the limit exists. We call $f^{\Delta}(t)$ the delta derivative
(or Hilger derivative) of $f$ at $t$. Moreover, we say that $f$
is delta differentiable on $\mathbb{T}^{\kappa}$ provided
$f^{\Delta}(t)$ exists for all $t\in \mathbb{T}^{\kappa}$. The function
$f^{\Delta}:\mathbb{T}^{\kappa}\rightarrow \mathbb{R}$ is then called
the (delta) derivative of $f$ on $\mathbb{T}^{\kappa}$.
\end{definition}

\begin{definition} (See \cite{BP1}).
A function $f:\mathbb{T}\rightarrow \mathbb{R}$ is called
rd-continuous provided it is continuous at right-dense points in
$\mathbb{T} $ and its left-sided limits exist (finite) at left-dense
points in $\mathbb{T}$. The set of rd-continuous functions
$f:\mathbb{T}\rightarrow \mathbb{R}$ is denoted by
$\mathcal{C}_{rd}$. Similarly, a function $f:\mathbb{T}\rightarrow
\mathbb{R}$ is called ld-continuous provided it is continuous at
left-dense points in $\mathbb{T} $ and its right-sided limits exist
(finite) at right-dense points in $\mathbb{T}$. The set of
ld-continuous functions $f:\mathbb{T}\rightarrow \mathbb{R}$ is
denoted by $\mathcal{C}_{ld}$.
\end{definition}

\begin{definition} (See \cite{BP1}).
Let $[a,b]$ denote a closed bounded interval in $\mathbb{T}$. A
function $F: [a,b]\rightarrow \mathbb{R}$ is called a delta
antiderivative of function $f: [a,b)\rightarrow \mathbb{R}$ provided
$F$ is continuous on $[a,b]$, delta differentiable on $[a,b)$, and $
F^{\Delta}(t)=f(t)$ for all $t\in [a,b)$. Then, we define the
$\Delta$-integral of $f$ from $a$ to $b$ by
$$
\int_{a}^{b}f(t)\Delta t := F(b)-F(a).
$$
\end{definition}

\begin{proposition} 
\label{P1}
(See \cite{AJ}) Suppose $\mathbb{T}$ is a time scale and
$f$ is an increasing continuous function
on the time-scale interval $[a,b]$.
If $F$ is the extension of $f$ to the real interval $[a,b]$ given by
\begin{equation*}
F(s) :=
\begin{cases}
f(s) & \textrm{ if } s \in\mathbb{T} , \\
f(t) & \textrm{ if } s \in (t,\sigma(t))\notin\mathbb{T},
\end{cases}
\end{equation*}
then
$$
\int_{a}^{b} f(t) \Delta t
\leq \int_{a}^{b} F(t)dt.
$$
\end{proposition}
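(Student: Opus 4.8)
The plan is to rewrite the left-hand side as the increment of a $\Delta$-antiderivative, extend that antiderivative to the whole real interval $[a,b]$, and then apply the classical fundamental theorem of calculus to the extension. Since $\mathbb{T}$ is closed, the open set $[a,b]\setminus\mathbb{T}$ is an at most countable union of pairwise disjoint intervals $(t_i,\sigma(t_i))$, $i\in I$, where each $t_i\in[a,b)$ is right-scattered with graininess $\mu(t_i)=\sigma(t_i)-t_i>0$ and $\sum_{i\in I}\mu(t_i)\le b-a$. As $f$ is continuous on the compact set $[a,b]\cap\mathbb{T}$, it is bounded there, say by $M$. I would then set $G(x):=\int_a^x f(t)\,\Delta t$ for $x\in[a,b]\cap\mathbb{T}$ (well defined, since $f$ is rd-continuous) and extend it to $\overline{G}:[a,b]\to\mathbb{R}$ by declaring $\overline{G}$ affine with slope $f(t_i)$ on each closed gap $[t_i,\sigma(t_i)]$. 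With this choice, the target becomes the identity $\int_a^b F(t)\,dt=\overline{G}(b)-\overline{G}(a)=\int_a^b f(t)\,\Delta t$, which is in fact stronger than the asserted inequality.

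The steps would then be: (i) check that $\overline{G}$ is continuous on $[a,b]$; the only nontrivial case is at a right endpoint $\sigma(t_i)$ of a gap, where the time-scale fundamental theorem gives $G(\sigma(t_i))=G(t_i)+\mu(t_i)f(t_i)$, matching the left-hand limit of the affine piece. (ii) Show $\overline{G}$ is Lipschitz on $[a,b]$ with constant $M$: the slopes on gaps are the values $f(t_i)$, bounded by $M$, and the increment of $G$ between two points of $\mathbb{T}$ is bounded by $M$ times their distance, so a general pair of points is handled by inserting the gap endpoints lying between them. Hence $\overline{G}$ is absolutely continuous, so $\int_a^b\overline{G}'(t)\,dt=\overline{G}(b)-\overline{G}(a)$. (iii) Identify $\overline{G}'=F$ almost everywhere: on each gap $\overline{G}'\equiv f(t_i)=F$, and at every point $x$ of $\mathbb{T}$ that is simultaneously left-dense and right-dense one obtains $\overline{G}'(x)=f(x)=F(x)$ by combining the $\Delta$-derivative $G^{\Delta}(x)=f(x)$ with the affine interpolation and the continuity of $f$ on the adjacent gaps; the remaining points of $\mathbb{T}$ are one-sidedly scattered and thus form a countable, hence Lebesgue-null, set. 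Combining (ii) and (iii) yields the displayed identity, and a fortiori the proposition.

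The step I expect to be the real obstacle is (iii), namely the almost-everywhere identity $\overline{G}'=F$ at the right-dense (respectively left-dense) points of $\mathbb{T}$: there the $\Delta$-derivative only controls difference quotients formed with points of $\mathbb{T}$, so one has to interpolate through the neighbouring gap, using that such a point is approached by points of $\mathbb{T}$ from that side and that $\overline{G}$ is affine --- in particular Lipschitz --- across the gap in between, and then pass to the limit on the two-sided real difference quotient. An alternative route, if one admits the Lebesgue $\Delta$-measure $\mu_{\Delta}$, is to observe that $\mu_{\Delta}$ agrees with Lebesgue measure on the right-dense part $D$ of $[a,b)\cap\mathbb{T}$ and charges each right-scattered point $t$ with mass $\mu(t)$, so that both sides of the claimed inequality equal $\int_D f\,d\lambda+\sum_{t}\mu(t)f(t)$, the sum running over the right-scattered points of $[a,b)\cap\mathbb{T}$; the rest is bookkeeping with the gap decomposition.
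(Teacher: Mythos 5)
The paper itself offers no proof of Proposition~\ref{P1}: it is quoted from \cite{AJ}, and the only in-paper hint of an argument is the pointer to \cite{MR2206702} for the relation between $\Delta$-integration and Lebesgue integration. Your argument is therefore necessarily a different route, and it is essentially sound: the gap decomposition of $[a,b]\setminus\mathbb{T}$, the identity $G(\sigma(t_i))=G(t_i)+\mu(t_i)f(t_i)$, the $M$-Lipschitz (hence absolutely continuous) extension $\overline{G}$, and the a.e.\ identification $\overline{G}'=F$ (the scattered points being countable, hence Lebesgue-null) together give the equality $\int_a^b f(t)\,\Delta t=\int_a^b F(t)\,dt$, of which the stated inequality is a trivial consequence. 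The delicate point you single out, the two-sided derivative at both-dense points, does go through: for $s$ in a gap $(t_i,\sigma(t_i))$ with $t_i\to x$, the quotient $\bigl(\overline{G}(s)-\overline{G}(x)\bigr)/(s-x)$ is sandwiched between $\min\{f(x),f(t_i)\}$ and $\max\{f(x),f(t_i)\}$ (monotonicity of $f$, or just its sup/inf on $[x,t_i]\cap\mathbb{T}$ plus continuity), so it tends to $f(x)$; note that in this form you do not even need the delta-derivative of $G$, and the monotonicity hypothesis is barely used, so you in fact prove a sharper and more general statement than the one quoted. Your alternative route in the closing paragraph is exactly the Cabada--Vivero decomposition $\mu_{\Delta}=\lambda+\sum_{t \text{ right-scattered}}\mu(t)\,\delta_{t}$ from \cite{MR2206702}, under which both sides equal $\int_{D}f\,d\lambda+\sum_{t}\mu(t)f(t)$; that is the shortest rigorous path and the one closest in spirit to what the paper's citations suggest, whereas your antiderivative construction buys a self-contained, measure-theory-light proof at the cost of the a.e.\ differentiation bookkeeping in step (iii).
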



\subsection{Fractional derivative and integral on time scales}

We adopt a recent notion of fractional derivative on time scales
introduced in \cite{MyID:328}, which is based on the notion
of fractional integral on time scales $\mathbb{T}$.
This is in contrast with \cite{BBT,MyID:320,MyID:324}, where first a notion
of fractional differentiation on time scales is introduced and only after that,
with the help of such concept, the fraction integral is defined.
The classical gamma and beta functions are used.

\begin{definition} (Gamma function)
For complex numbers with a positive real part,
the gamma function $\Gamma(t)$ is defined
by the following convergent improper integral:
$$
\Gamma(t) := \int_{0}^{\infty} x^{t-1} e^{-x}dx.
$$
\end{definition}

\begin{definition} (Beta function)
The beta function, also called the Euler integral of first kind,
is the special function $\mathrm{B}(x,y)$ defined by
$$
\mathrm{B}(x,y) := \int _{0}^{1}t^{x-1}(1-t)^{y-1}dt,
\quad x>0, \quad y>0.
$$
\end{definition}

\begin{remark}
The gamma function satisfies the following property:
$\Gamma(t+1)=t \Gamma(t)$.
The beta function can be expressed through the gamma function by
$$
\mathrm{B}(x,y)=\frac{\Gamma(x) \Gamma(y)}{\Gamma(x+y)}.
$$
\end{remark}

\begin{definition} 
\label{def:FI} 
(Fractional integral on time scales \cite{MyID:328})
Suppose $\mathbb{T}$ is a time scale, $[a,b]$ is an
interval of $\mathbb{T}$, and $h$ is an integrable function on
$[a,b]$. Let $0 < \alpha <1$. Then the left fractional integral of
order $\alpha$ of $h$ is defined by
$$
{_{a}^{\mathbb{T}}I}_{t}^{\alpha}h(t) := \int_{a}^{t}
\frac{(t-s)^{\alpha-1}}{\Gamma(\alpha)}h(s)\Delta s.
$$
The right fractional integral of order $\alpha$ of $h$ is defined
by
$$
{_{t}^{\mathbb{T}}I}_{b}^{\alpha}h(t) := \int_{t}^{b}
\frac{(s-t)^{\alpha-1}}{\Gamma(\alpha)}h(s)\Delta s,
$$
where $\Gamma$ is the gamma function.
\end{definition}

\begin{definition}
\label{RL} 
(Riemann--Liouville fractional derivative on time scales \cite{MyID:328}) 
Let $\mathbb{T}$ be a time scale, $t\in\mathbb{T}$, 
$0 < \alpha <1$, and $h:\mathbb{T}\rightarrow \mathbb{R}$.
The left Riemann--Liouville fractional derivative of order
$\alpha$ of $h$ is defined by
\begin{equation}
\label{eq3:L} 
{_{a}^{\mathbb{T}}D}_{t}^{\alpha}h(t)
:=\left({_{a}^{\mathbb{T}}I}_{t}^{1-\alpha}h(t)\right)^{\Delta}
=\frac{1}{\Gamma(1-\alpha)}\left(\int_{a}^{t}
(t-s)^{-\alpha}h(s)\Delta s\right)^{\Delta}.
\end{equation}
The right Riemann--Liouville fractional derivative of order
$\alpha$ of $h$ is defined by
\begin{equation*}
{_{t}^{\mathbb{T}}D}_{b}^{\alpha}h(t)
:=-\left({_{t}^{\mathbb{T}}I}_{b}^{1-\alpha}h(t)\right)^{\Delta}
=\frac{-1}{\Gamma(1-\alpha)}\left(\int_{t}^{b}
(s-t)^{-\alpha}h(s)\Delta s\right)^{\Delta}.
\end{equation*}

\end{definition}

\begin{definition}
\label{Ca} 
(Caputo fractional derivative on time scales \cite{AJ}) 
Let $\mathbb{T}$ be a time scale, $t\in\mathbb{T}$, 
$0 < \alpha <1$, and $h:\mathbb{T}\rightarrow \mathbb{R}$.
The left Caputo fractional derivative 
of order $\alpha$ of $h$ is defined by
\begin{equation*} 
{_{a}^{\mathbb{T}C}D}_{t}^{\alpha}h(t)
:={_{a}^{\mathbb{T}}I}_{t}^{1-\alpha}(h^{\Delta}(t))
=\frac{1}{\Gamma(1-\alpha)}\int_{a}^{t}
(t-s)^{-\alpha}h^{\Delta}(s)\Delta s.
\end{equation*}
The right Caputo fractional derivative of order 
$\alpha$ of $h$ is defined by
\begin{equation*}
{_{t}^{\mathbb{T}C}D}_{b}^{\alpha}h(t)
:=-{_{t}^{\mathbb{T}}I}_{b}^{1-\alpha}(h^{\Delta}(t))
=\frac{-1}{\Gamma(1-\alpha)}\int_{t}^{b}
(s-t)^{-\alpha}h^{\Delta}(s)\Delta s.
\end{equation*}
\end{definition}

The relation between the left/right RLFD and the left/right CFD
is as follows:
$$
_{a}^{\mathbb T C} D_{t}^{\alpha}x(t)=\,_{a}^{\mathbb T}
D_{t}^{\alpha}x(t)-\sum_{k=0}^{n-1}
\frac{x^{(k)}(a)}{\Gamma(k-\alpha+1)}(t-a)^{(k-\alpha)},
$$
$$
_{t}^{\mathbb T C} D_{b}^{\alpha}x(t)=\,_{t}^{\mathbb T}
D_{b}^{\alpha}x(t)-\sum_{k=0}^{n-1}
\frac{x^{(k)}(b)}{\Gamma(k-\alpha+1)}(b-t)^{(k-\alpha)}.
$$
If $x$ and $x^{(i)}$, $i = 1, \ldots , n-1$, vanish at $t = a$, 
then $_{a}^{\mathbb T}D_{t}^{\beta}x(t) =\, _{a}^{\mathbb T C}
D_{t}^{\beta}x(t)$, and if they vanish at $t = b$, then $
_{t}^{\mathbb T} D_{b}^{\beta}x(t) =\, _{t}^{\mathbb T C}
D_{b}^{\beta}x(t)$. Furthermore, 
$_{a}^{\mathbb T C}  D_{t}^{\alpha}\,c=0$, 
where $c$ is a constant, and
$$
_{a}^{\mathbb T C}  D_{t}^{\alpha}\,t^{n}
=
\begin{cases}
0,\,\, \text{for} \,\,\, n\in N_{0}\,\,\text{and}\,\,\, n<[\alpha], \\
\frac{\Gamma(n+1)}{\Gamma (n+1-\alpha)}t^{n-\alpha},\,\, 
\text{for}\,\, n\in \mathbb{N}_{0} \,\,\text{and}\, n\geq[\beta],
\end{cases}
$$
where $\mathbb{N}_{0}=\{0,1,2,\ldots\}$. 

\begin{remark} 
If $\mathbb{T}=\mathbb{R}$, then Definition~\ref{RL} gives the
classical left and right Riemann--Liouville fractional derivatives
\cite{book:Podlubny}. Similar comment for Definition~\ref{Ca}.
For different extensions of the fractional
derivative to time scales using the Caputo approach, 
see \cite{MR2800417}. For local approaches to
fractional calculus on time scales, we refer the reader to
\cite{BBT,MyID:320,MyID:324}.  Here we restrict ourselves to the
delta approach to time scales. Analogous definitions are, however,
trivially obtained for the nabla approach to time scales by using
the duality theory of \cite{MR3662975,MyID:307}.
\end{remark}


\subsection{Properties of the time-scale fractional operators}
\label{sec:properties}

We recall some fundamental properties
of the fractional operators on time scales.

\begin{proposition}
\label{P2} (See Proposition~15 of \cite{MyID:328}).  
Let $\mathbb{T}$ be a time scale with derivative $\Delta$, 
and $0 < \alpha <1$. Then,
${_{a}^{\mathbb{T}}D}_{t}^{\alpha}
= \Delta \circ {_{a}^{\mathbb{T}}I_{t}^{1-\alpha}}$.
\end{proposition}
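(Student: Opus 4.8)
The plan is simply to unfold the two definitions involved and observe that they coincide. First, recall from Definition~\ref{def:FI} that, since $0<\alpha<1$ forces $1-\alpha\in(0,1)$ as well, the left fractional integral ${_{a}^{\mathbb{T}}I}_{t}^{1-\alpha}h$ is a well-defined function on $[a,b]$ whenever $h$ is integrable there; explicitly, $\bigl({_{a}^{\mathbb{T}}I}_{t}^{1-\alpha}h\bigr)(t)=\frac{1}{\Gamma(1-\alpha)}\int_{a}^{t}(t-s)^{-\alpha}h(s)\,\Delta s$. Thus ${_{a}^{\mathbb{T}}I}_{t}^{1-\alpha}$ is a genuine operator sending $h$ to this new function of $t$.

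Next I would invoke Definition~\ref{RL}: the left Riemann--Liouville fractional derivative of order $\alpha$ is \emph{defined} to be $\bigl({_{a}^{\mathbb{T}}I}_{t}^{1-\alpha}h(t)\bigr)^{\Delta}$, i.e.\ the delta derivative of the function $t\mapsto\bigl({_{a}^{\mathbb{T}}I}_{t}^{1-\alpha}h\bigr)(t)$. In operator notation, applying ${_{a}^{\mathbb{T}}I}_{t}^{1-\alpha}$ to $h$ and then applying $\Delta$ to the resulting function is by definition the composition $\bigl(\Delta\circ{_{a}^{\mathbb{T}}I}_{t}^{1-\alpha}\bigr)h$. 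Hence ${_{a}^{\mathbb{T}}D}_{t}^{\alpha}h=\bigl(\Delta\circ{_{a}^{\mathbb{T}}I}_{t}^{1-\alpha}\bigr)h$ for every $h$ in the common domain, which is precisely the asserted operator identity.

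The only point requiring any care --- and the closest thing to an obstacle --- is domain bookkeeping: the identity is understood on the class of functions $h$ for which ${_{a}^{\mathbb{T}}I}_{t}^{1-\alpha}h$ is delta differentiable on $\mathbb{T}^{\kappa}$, and this is exactly the class on which the left-hand side ${_{a}^{\mathbb{T}}D}_{t}^{\alpha}h$ is declared to exist in Definition~\ref{RL}, so no extra hypothesis is needed and no convergence issue arises beyond those already built into the definitions. I therefore expect no substantive difficulty; the proposition is essentially a reformulation of Definition~\ref{RL} in composition-of-operators form, recorded separately because it streamlines the integration-by-parts computations carried out later in the paper.
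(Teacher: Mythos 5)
Your argument is correct: the paper gives no proof here, simply citing Proposition~15 of \cite{MyID:328}, and the statement is indeed an immediate unfolding of Definition~\ref{RL}, which \emph{defines} ${_{a}^{\mathbb{T}}D}_{t}^{\alpha}h$ as $\bigl({_{a}^{\mathbb{T}}I}_{t}^{1-\alpha}h\bigr)^{\Delta}$. Your domain remark (the identity holds on functions $h$ for which ${_{a}^{\mathbb{T}}I}_{t}^{1-\alpha}h$ is delta differentiable) is exactly the right caveat, so nothing is missing.
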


\begin{proposition}
\label{P3} (See Proposition~16 of \cite{MyID:328}). 
For any function $h$ integrable on $[a,b]$, the Riemann--Liouville
$\Delta$-fractional integral satisfies
${_{a}^{\mathbb{T}}I}_{t}^{\alpha} \circ
{_{a}^{\mathbb{T}}I_{t}^{\beta}} =
{_{a}^{\mathbb{T}}I_{t}^{\alpha+\beta}}$ 
for $\alpha> 0$ and $\beta > 0$.
\end{proposition}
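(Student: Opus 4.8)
The plan is to collapse the composition of two single $\Delta$-integrals into one, by interchanging the order of integration and then evaluating the resulting inner kernel. First I would substitute the definitions from Definition~\ref{def:FI}: for $t$ in the interval,
$$
{_{a}^{\mathbb{T}}I}_{t}^{\alpha}\left({_{a}^{\mathbb{T}}I}_{t}^{\beta}h\right)(t)
=\frac{1}{\Gamma(\alpha)\Gamma(\beta)}\int_{a}^{t}(t-s)^{\alpha-1}
\left(\int_{a}^{s}(s-\tau)^{\beta-1}h(\tau)\,\Delta\tau\right)\Delta s ,
$$
an iterated $\Delta$-integral over the triangle $\{(s,\tau):a\le\tau\le s\le t\}$. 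I would then invoke a Fubini-type theorem for iterated $\Delta$-integrals to rewrite this as $\Gamma(\alpha)^{-1}\Gamma(\beta)^{-1}\int_{a}^{t}h(\tau)\left(\int_{\tau}^{t}(t-s)^{\alpha-1}(s-\tau)^{\beta-1}\,\Delta s\right)\Delta\tau$. Checking the absolute convergence required to apply Fubini in the presence of the integrable endpoint singularities when $\alpha<1$ or $\beta<1$ is the first technical point to settle.

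It then remains to establish the kernel identity
$$
\int_{\tau}^{t}(t-s)^{\alpha-1}(s-\tau)^{\beta-1}\,\Delta s
=\frac{\Gamma(\alpha)\Gamma(\beta)}{\Gamma(\alpha+\beta)}\,(t-\tau)^{\alpha+\beta-1},
\qquad a\le\tau\le t .
$$
On $\mathbb{T}=\mathbb{R}$ this is precisely the classical beta-integral: the substitution $s=\tau+(t-\tau)r$ converts the left-hand side into $(t-\tau)^{\alpha+\beta-1}\mathrm{B}(\beta,\alpha)$, and the relation $\mathrm{B}(x,y)=\Gamma(x)\Gamma(y)/\Gamma(x+y)$ closes that case. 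Once the kernel identity is in hand, the conclusion follows by reading the last display backwards against Definition~\ref{def:FI} with exponent $\alpha+\beta$.

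The main obstacle is exactly this kernel identity on a \emph{general} time scale: there is no change-of-variables formula for the $\Delta$-integral that reproduces the classical substitution, and for time scales with isolated points the left-hand side need not coincide with the beta-function value, so the statement may have to be qualified. Here I would try to bracket the $\Delta$-integral between ordinary Riemann integrals using Proposition~\ref{P1} — extending the piecewise-monotone integrand to the real interval $[\tau,t]$ and comparing from both sides — and examine whether the two bounds can be forced to agree, or else pin down the additional hypothesis under which equality survives (for instance $h$ together with the relevant lower-order integrals vanishing at the left endpoint $a$, which is the regularity already exploited elsewhere in the paper via the Caputo--Riemann--Liouville relation). Before attempting a general argument I would test the identity on a small discrete time scale such as $\{0,1,2\}$, since that immediately reveals whether a proof exists in the stated generality.
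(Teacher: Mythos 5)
Your reduction is exactly the route taken by the source the paper relies on: Proposition~\ref{P3} is not proved in this paper at all, it is quoted verbatim from Proposition~16 of \cite{MyID:328}, whose argument is precisely your Fubini step followed by evaluating the inner kernel as a beta integral. So up to the point where you stop, you and the cited proof coincide. The difference is that you refuse to take the last step, and you are right to refuse: the kernel identity
$\int_{\tau}^{t}(t-s)^{\alpha-1}(s-\tau)^{\beta-1}\,\Delta s
=\mathrm{B}(\beta,\alpha)(t-\tau)^{\alpha+\beta-1}$
is a statement about a $\Delta$-integral, and the classical change of variables used to prove it has no analogue on a time scale with scattered points. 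Proposition~\ref{P1} cannot rescue it either, since it only delivers a one-sided comparison with the Lebesgue integral, not equality.

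Your suggested sanity check on $\mathbb{T}=\{0,1,2\}$ settles the matter, and not in favour of the statement. Take $a=0$, $h\equiv 1$, $\alpha=\beta=\tfrac12$. Then
\begin{equation*}
{_{0}^{\mathbb{T}}I}_{t}^{1/2}h(0)=0,\qquad
{_{0}^{\mathbb{T}}I}_{t}^{1/2}h(1)=\frac{1}{\sqrt{\pi}},\qquad
{_{0}^{\mathbb{T}}I}_{t}^{1/2}\!\left({_{0}^{\mathbb{T}}I}_{t}^{1/2}h\right)(2)
=\frac{1}{\sqrt{\pi}}\left[(2)^{-1/2}\cdot 0+(1)^{-1/2}\cdot\frac{1}{\sqrt{\pi}}\right]
=\frac{1}{\pi},
\end{equation*}
whereas ${_{0}^{\mathbb{T}}I}_{t}^{1}h(2)=\int_{0}^{2}\Delta s=2\neq \frac{1}{\pi}$. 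So the semigroup law fails already for a bounded function on a three-point time scale, with exponents in $(0,1)$ and no singularity issues in the outer kernel. Consequently your proof cannot be completed in the stated generality, and neither can any other: the identity \eqref{e1}-style interchange is fine, but the composition property holds only when the kernel identity does, i.e.\ essentially for $\mathbb{T}=\mathbb{R}$ (or after replacing $(t-s)^{\alpha-1}$ by time-scale-adapted power functions). The honest conclusion of your proposal — that the proposition needs qualification on general time scales — is the correct one; the paper sidesteps the issue only by citing \cite{MyID:328} rather than proving it.
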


\begin{proposition}
\label{P4} (See Proposition~17 of \cite{MyID:328}). For any function
$h$ integrable on $[a,b]$ one has ${_{a}^{\mathbb{T}}D}_{t}^{\alpha}
\circ {_{a}^{\mathbb{T}}I_{t}^{\alpha}}h = h$.
\end{proposition}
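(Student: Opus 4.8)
The plan is to collapse the composition ${_{a}^{\mathbb{T}}D}_{t}^{\alpha}\circ{_{a}^{\mathbb{T}}I}_{t}^{\alpha}$ into a single delta antiderivative followed by one delta derivative, and then to quote the fundamental theorem of calculus on time scales.

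\emph{Step 1.} Apply Proposition~\ref{P2} to the function ${_{a}^{\mathbb{T}}I}_{t}^{\alpha}h$, which is legitimate since $0<\alpha<1$ forces $1-\alpha\in(0,1)$. This gives
\[
{_{a}^{\mathbb{T}}D}_{t}^{\alpha}\left({_{a}^{\mathbb{T}}I}_{t}^{\alpha}h\right)(t)
=\left({_{a}^{\mathbb{T}}I}_{t}^{1-\alpha}\left({_{a}^{\mathbb{T}}I}_{t}^{\alpha}h\right)\right)^{\Delta}(t).
\]
\emph{Step 2.} Since the two orders $1-\alpha$ and $\alpha$ are both strictly positive and $h$ is integrable on $[a,b]$, Proposition~\ref{P3} yields ${_{a}^{\mathbb{T}}I}_{t}^{1-\alpha}\circ{_{a}^{\mathbb{T}}I}_{t}^{\alpha}={_{a}^{\mathbb{T}}I}_{t}^{(1-\alpha)+\alpha}={_{a}^{\mathbb{T}}I}_{t}^{1}$. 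By Definition~\ref{def:FI} with exponent $1$, the kernel reduces to $(t-s)^{0}/\Gamma(1)=1$, so ${_{a}^{\mathbb{T}}I}_{t}^{1}h(t)=\int_{a}^{t}h(s)\,\Delta s$ is the ordinary delta antiderivative of $h$ that vanishes at $t=a$. \emph{Step 3.} Apply the fundamental theorem of calculus on time scales, $\left(\int_{a}^{t}h(s)\,\Delta s\right)^{\Delta}=h(t)$; chaining the three steps gives ${_{a}^{\mathbb{T}}D}_{t}^{\alpha}\left({_{a}^{\mathbb{T}}I}_{t}^{\alpha}h\right)=h$, as claimed.

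The one place that needs care is Step~3: the time-scale fundamental theorem delivers $F^{\Delta}=h$ when $h$ is rd-continuous, so for a merely integrable $h$ the identity should be read at the points of continuity of $h$ (in particular $\Delta$-almost everywhere) — which is precisely the regularity at hand in the application to~\eqref{eq1}, where the relevant integrand inherits right-dense continuity from $f$. I would also note for completeness that Proposition~\ref{P3} is established independently, by a Dirichlet/Fubini interchange of iterated $\Delta$-integrals, so invoking it here introduces no circularity.
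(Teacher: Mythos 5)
Your proof is correct and follows essentially the same route as the source: the paper itself only cites Proposition~17 of \cite{MyID:328}, and the argument there is precisely your chain ${_{a}^{\mathbb{T}}D}_{t}^{\alpha}\circ{_{a}^{\mathbb{T}}I}_{t}^{\alpha} = \Delta\circ{_{a}^{\mathbb{T}}I}_{t}^{1-\alpha}\circ{_{a}^{\mathbb{T}}I}_{t}^{\alpha} = \Delta\circ{_{a}^{\mathbb{T}}I}_{t}^{1}$ via Propositions~\ref{P2} and~\ref{P3}, concluded by the fundamental theorem of the time-scale calculus. Your remark that for merely integrable $h$ the final differentiation step holds at points of (rd-)continuity, hence $\Delta$-a.e., is an appropriate reading of the statement and does not affect its use in the paper.
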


\begin{corollary} (See Corollary~18 of \cite{MyID:328}).
For $0<\alpha<1$, we have ${_{a}^{\mathbb{T}}D}_{t}^{\alpha} \circ
{_{a}^{\mathbb{T}}D_{t}^{-\alpha}} =Id$ and
${_{a}^{\mathbb{T}}I}_{t}^{-\alpha} \circ
{_{a}^{\mathbb{T}}I_{t}^{\alpha}}=Id$, where $Id$ denotes the
identity operator.
\end{corollary}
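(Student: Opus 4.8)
The plan is to reduce both claimed identities to Proposition~\ref{P4} by first pinning down the meaning of the operators of negative order. For $0<\alpha<1$, the Riemann--Liouville fractional derivative of order $-\alpha$ on $\mathbb{T}$ is, by convention, nothing but the fractional integral of order $\alpha$, i.e. ${_{a}^{\mathbb{T}}D}_{t}^{-\alpha} := {_{a}^{\mathbb{T}}I}_{t}^{\alpha}$; dually, the fractional integral of order $-\alpha$ is the fractional derivative of order $\alpha$, ${_{a}^{\mathbb{T}}I}_{t}^{-\alpha} := {_{a}^{\mathbb{T}}D}_{t}^{\alpha}$. So the first step is simply to record these identifications and to note that the natural class on which to work is the class of functions integrable on $[a,b]$, which is precisely the hypothesis under which Proposition~\ref{P4} is stated.

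With that in hand, the first identity is immediate: substituting the definition of the negative-order derivative gives
$$
{_{a}^{\mathbb{T}}D}_{t}^{\alpha} \circ {_{a}^{\mathbb{T}}D}_{t}^{-\alpha}
= {_{a}^{\mathbb{T}}D}_{t}^{\alpha} \circ {_{a}^{\mathbb{T}}I}_{t}^{\alpha},
$$
and the right-hand side equals $Id$ by Proposition~\ref{P4}. The second identity is handled in the same way: substituting the definition of the negative-order integral gives
$$
{_{a}^{\mathbb{T}}I}_{t}^{-\alpha} \circ {_{a}^{\mathbb{T}}I}_{t}^{\alpha}
= {_{a}^{\mathbb{T}}D}_{t}^{\alpha} \circ {_{a}^{\mathbb{T}}I}_{t}^{\alpha},
$$
which is again $Id$ by Proposition~\ref{P4}. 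Thus both assertions collapse to a single application of Proposition~\ref{P4}, applied to the same composition ${_{a}^{\mathbb{T}}D}_{t}^{\alpha} \circ {_{a}^{\mathbb{T}}I}_{t}^{\alpha}$.

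There is essentially no hard analytic step here; the only point that requires care is the bookkeeping of conventions. One must make sure that ``derivative of negative order'' and ``integral of negative order'' are read as the fractional integral and fractional derivative of the corresponding positive order, respectively, and not via some independent limiting definition, since otherwise the statement would not literally be a corollary of Proposition~\ref{P4}. It is also worth remarking that the compositions taken in the opposite order, such as ${_{a}^{\mathbb{T}}I}_{t}^{\alpha} \circ {_{a}^{\mathbb{T}}D}_{t}^{\alpha}$, do \emph{not} reduce to the identity in general --- they differ by the ``initial value'' terms that distinguish Riemann--Liouville from Caputo derivatives --- so the order of composition in the statement is not incidental.
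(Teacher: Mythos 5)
Your proposal is correct and coincides with the intended argument: the paper states this corollary by citation to Corollary~18 of \cite{MyID:328}, where it follows exactly as you describe, namely by reading ${_{a}^{\mathbb{T}}D}_{t}^{-\alpha}$ as ${_{a}^{\mathbb{T}}I}_{t}^{\alpha}$ and ${_{a}^{\mathbb{T}}I}_{t}^{-\alpha}$ as ${_{a}^{\mathbb{T}}D}_{t}^{\alpha}$ and then applying Proposition~\ref{P4}. Your closing remark on the order of composition (and its link to Theorems~\ref{th1}--\ref{th2}) is accurate and a useful caveat, but nothing more is needed.
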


\begin{definition}(See \cite{MyID:328})
For $\alpha>0$, we denote by ${_{a}^{\mathbb{T}}I}_{t}^{\alpha}([a,b])$
the space of functions that can be represented by the
Riemann--Liouville $\Delta$ integral of order $\alpha$ 
of some $\mathcal{C}([a,b])$-function.
\end{definition}

\begin{theorem}
\label{th1} (See Theorem~20 of \cite{MyID:328}). Let $f\in
\mathcal{C}([a,b])$ and $\alpha>0$. Function $f \in
{_{a}^{\mathbb{T}}}I_{t}^{\alpha}([a,b])$ if and
only if 
${_{a}^{\mathbb{T}}I}_{t}^{1-\alpha}f \in C^1([a,b])$
and
$\left.\left(_{a}^{\mathbb{T}}I_{t}^{1-\alpha}f(t)\right)\right|_{t=a}=0$.
\end{theorem}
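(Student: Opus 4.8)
The plan is to prove both implications (working, as the use of $1-\alpha$ forces, with $0<\alpha<1$) by chaining the composition and inversion properties of the time-scale Riemann--Liouville operators recorded in Propositions~\ref{P2}--\ref{P4}, together with the fundamental theorem of calculus on time scales. The candidate representative in the nontrivial direction will be the Riemann--Liouville fractional derivative $_{a}^{\mathbb{T}}D_{t}^{\alpha}f$ itself.

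For necessity, I would start from $f={_{a}^{\mathbb{T}}I}_{t}^{\alpha}\varphi$ with $\varphi\in\mathcal{C}([a,b])$ and apply the semigroup property of Proposition~\ref{P3}: since $(1-\alpha)+\alpha=1$, one obtains ${_{a}^{\mathbb{T}}I}_{t}^{1-\alpha}f={_{a}^{\mathbb{T}}I}_{t}^{1}\varphi=\int_{a}^{t}\varphi(s)\,\Delta s$. A $\Delta$-antiderivative of a continuous (hence rd-continuous) function is continuous with continuous $\Delta$-derivative equal to $\varphi$, so ${_{a}^{\mathbb{T}}I}_{t}^{1-\alpha}f\in C^{1}([a,b])$; evaluating the integral at $t=a$ gives the vanishing condition $\bigl({_{a}^{\mathbb{T}}I}_{t}^{1-\alpha}f\bigr)|_{t=a}=0$.

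For sufficiency, set $g:={_{a}^{\mathbb{T}}I}_{t}^{1-\alpha}f$ and assume $g\in C^{1}([a,b])$ with $g(a)=0$. Take $\varphi:=g^{\Delta}$; by Proposition~\ref{P2} this equals ${_{a}^{\mathbb{T}}D}_{t}^{\alpha}f$, and it is continuous precisely because $g\in C^{1}([a,b])$. Using the time-scale fundamental theorem of calculus and $g(a)=0$, I would write $g={_{a}^{\mathbb{T}}I}_{t}^{1}\varphi$, and then split this via Proposition~\ref{P3} as ${_{a}^{\mathbb{T}}I}_{t}^{1}\varphi={_{a}^{\mathbb{T}}I}_{t}^{1-\alpha}\bigl({_{a}^{\mathbb{T}}I}_{t}^{\alpha}\varphi\bigr)$. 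Hence ${_{a}^{\mathbb{T}}I}_{t}^{1-\alpha}f={_{a}^{\mathbb{T}}I}_{t}^{1-\alpha}\bigl({_{a}^{\mathbb{T}}I}_{t}^{\alpha}\varphi\bigr)$; applying ${_{a}^{\mathbb{T}}D}_{t}^{1-\alpha}$ to both sides and invoking Proposition~\ref{P4} (with $h=f$ on the left and $h={_{a}^{\mathbb{T}}I}_{t}^{\alpha}\varphi$ on the right) cancels the fractional integral and yields $f={_{a}^{\mathbb{T}}I}_{t}^{\alpha}\varphi$ with $\varphi\in\mathcal{C}([a,b])$, i.e. $f\in{_{a}^{\mathbb{T}}I}_{t}^{\alpha}([a,b])$.

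The routine points to check are that a continuous function on a time-scale interval is $\Delta$-integrable and that ${_{a}^{\mathbb{T}}I}_{t}^{\alpha}\varphi$ is again $\Delta$-integrable, so that Proposition~\ref{P4} legitimately applies to it; both are standard in this framework. I expect the only genuinely delicate step to be the ``cancellation'' in the sufficiency part: one cannot strip off ${_{a}^{\mathbb{T}}I}_{t}^{1-\alpha}$ directly, and it is essential that ${_{a}^{\mathbb{T}}D}_{t}^{1-\alpha}$ is a left inverse of ${_{a}^{\mathbb{T}}I}_{t}^{1-\alpha}$ on integrable functions (Proposition~\ref{P4}). The hypothesis $g(a)=0$ is exactly what identifies $g$ with ${_{a}^{\mathbb{T}}I}_{t}^{1}\!\left(g^{\Delta}\right)$ rather than ${_{a}^{\mathbb{T}}I}_{t}^{1}\!\left(g^{\Delta}\right)+g(a)$; without it the representation of $f$ as a fractional integral of a continuous function would fail, which is also why this boundary condition appears in the statement.
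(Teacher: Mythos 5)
Your proposal is correct, and it matches the intended argument: the paper itself states Theorem~\ref{th1} without proof (it is quoted from Theorem~20 of \cite{MyID:328}), and your two implications---necessity via the semigroup property ${_{a}^{\mathbb{T}}I}_{t}^{1-\alpha}\circ{_{a}^{\mathbb{T}}I}_{t}^{\alpha}={_{a}^{\mathbb{T}}I}_{t}^{1}$ plus the time-scale fundamental theorem of calculus, sufficiency via $\varphi:=\bigl({_{a}^{\mathbb{T}}I}_{t}^{1-\alpha}f\bigr)^{\Delta}={_{a}^{\mathbb{T}}D}_{t}^{\alpha}f$ and the left-inverse property of Proposition~\ref{P4}---are exactly the standard proof scheme used in that reference, the time-scale analogue of the classical Riemann--Liouville characterization. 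The one caveat, which you already flag, is that the hypothesis $\alpha>0$ must be read as $0<\alpha<1$ so that ${_{a}^{\mathbb{T}}I}_{t}^{1-\alpha}$ and the splitting ${_{a}^{\mathbb{T}}I}_{t}^{1}={_{a}^{\mathbb{T}}I}_{t}^{1-\alpha}\circ{_{a}^{\mathbb{T}}I}_{t}^{\alpha}$ make sense; with that reading your use of Propositions~\ref{P2}--\ref{P4} is legitimate and the proof is complete.
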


\begin{theorem}
\label{th2}
(See Theorem~21 of \cite{MyID:328}) Let $\alpha > 0$ and
$f\in \mathcal{C}([a,b])$ satisfy the conditions in
Theorem~\ref{th1}. Then,
$\left({_{a}^{\mathbb{T}}I}_{t}^{\alpha}
\circ {_{a}^{\mathbb{T}}D}_{t}^{\alpha}\right)(f) = f$.
\end{theorem}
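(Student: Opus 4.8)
The plan is to derive the statement directly from the two facts already at our disposal: the characterisation of the range space ${_{a}^{\mathbb{T}}I}_{t}^{\alpha}([a,b])$ in Theorem~\ref{th1}, and the left-inverse property ${_{a}^{\mathbb{T}}D}_{t}^{\alpha}\circ {_{a}^{\mathbb{T}}I}_{t}^{\alpha}=Id$ from Proposition~\ref{P4}. Since $f\in\mathcal{C}([a,b])$ is assumed to satisfy the conditions of Theorem~\ref{th1} --- namely ${_{a}^{\mathbb{T}}I}_{t}^{1-\alpha}f\in C^{1}([a,b])$ and $\bigl({_{a}^{\mathbb{T}}I}_{t}^{1-\alpha}f(t)\bigr)\big|_{t=a}=0$ --- that theorem gives $f\in {_{a}^{\mathbb{T}}I}_{t}^{\alpha}([a,b])$. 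By the very definition of this space, there exists $\varphi\in\mathcal{C}([a,b])$ such that $f={_{a}^{\mathbb{T}}I}_{t}^{\alpha}\varphi$.

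Next I would apply the left Riemann--Liouville $\Delta$-derivative to this representation. Because $\varphi$ is continuous, hence integrable on $[a,b]$, Proposition~\ref{P4} applies and yields
$$
{_{a}^{\mathbb{T}}D}_{t}^{\alpha}f
={_{a}^{\mathbb{T}}D}_{t}^{\alpha}\bigl({_{a}^{\mathbb{T}}I}_{t}^{\alpha}\varphi\bigr)=\varphi .
$$
Applying ${_{a}^{\mathbb{T}}I}_{t}^{\alpha}$ to both sides of this identity and using once more the representation $f={_{a}^{\mathbb{T}}I}_{t}^{\alpha}\varphi$, we obtain
$$
\bigl({_{a}^{\mathbb{T}}I}_{t}^{\alpha}\circ {_{a}^{\mathbb{T}}D}_{t}^{\alpha}\bigr)(f)
={_{a}^{\mathbb{T}}I}_{t}^{\alpha}\varphi = f ,
$$
which is precisely the assertion.

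In view of this structure I do not expect a genuine obstacle; the substantive content is already packaged into Theorem~\ref{th1} and Proposition~\ref{P4}, and the argument is essentially their composition. The only point requiring care is to make sure the representation $f={_{a}^{\mathbb{T}}I}_{t}^{\alpha}\varphi$ is available with a \emph{continuous} density $\varphi$, so that Proposition~\ref{P4} is indeed applicable and the outer $\Delta$-integral in the last display is well defined; this is exactly what the definition of ${_{a}^{\mathbb{T}}I}_{t}^{\alpha}([a,b])$ combined with Theorem~\ref{th1} provides. A more computational alternative would be to insert the definition ${_{a}^{\mathbb{T}}D}_{t}^{\alpha}f=\bigl({_{a}^{\mathbb{T}}I}_{t}^{1-\alpha}f\bigr)^{\Delta}$ and attempt to evaluate ${_{a}^{\mathbb{T}}I}_{t}^{\alpha}\bigl(({_{a}^{\mathbb{T}}I}_{t}^{1-\alpha}f)^{\Delta}\bigr)$ directly, via a time-scale integration by parts using the vanishing boundary value at $t=a$ together with the semigroup law ${_{a}^{\mathbb{T}}I}_{t}^{\alpha}\circ {_{a}^{\mathbb{T}}I}_{t}^{1-\alpha}={_{a}^{\mathbb{T}}I}_{t}^{1}$ of Proposition~\ref{P3}; this route works over $\mathbb{R}$ but is delicate on a general time scale because of the singular kernel $(t-s)^{\alpha-1}$, so I would avoid it in favour of the short argument above.
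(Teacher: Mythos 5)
Your proof is correct: Theorem~\ref{th1} gives the representation $f={_{a}^{\mathbb{T}}I}_{t}^{\alpha}\varphi$ with $\varphi\in\mathcal{C}([a,b])$, Proposition~\ref{P4} then yields ${_{a}^{\mathbb{T}}D}_{t}^{\alpha}f=\varphi$, and applying ${_{a}^{\mathbb{T}}I}_{t}^{\alpha}$ recovers $f$. Note that the paper itself offers no proof of Theorem~\ref{th2} --- it is quoted from Theorem~21 of \cite{MyID:328} --- and your argument is precisely the standard composition of Theorem~\ref{th1} with Proposition~\ref{P4} used in that source, so there is nothing to add; your caution about the continuity of the density $\varphi$ and your decision to avoid the direct kernel computation are both well placed.
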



\subsection{Existence of solutions to fractional IVPs on time scales}
\label{sec:existence:sol}

Let $\mathbb{T}$ be a time scale and
$\mathcal{J}=[t_{0},t_{0}+a]\subset \mathbb{T}$.
Consider the fractional order initial value problem \eqref{eq1}
defined on $\mathbb{T}$. Then the function 
$y\in\mathcal{C}(\mathcal{J},\mathbb{R})$ is a solution
of problem \eqref{eq1} if
${_{t_{0}}^{\mathbb{T}}D}_{t}^{\alpha}y(t)= f(t,y)$
on $\mathcal{J}$ and
${_{t_{0}}^{\mathbb{T}}I}_{t}^{\alpha}y(t_{0})=0$.

\begin{theorem}(See Theorem~24 of \cite{MyID:328}).
If $f :\mathcal{J}\times\mathbb{R}\rightarrow\mathbb{R}$ is a
rd-continuous bounded function for which there exists $M > 0$ such that
$|f(t,y)|\leq M$ for all $t\in\mathcal{J}$ and $y\in\mathbb{R}$, 
then problem~\eqref{eq1} has a solution on $\mathcal{J}$.
\end{theorem}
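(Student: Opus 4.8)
The plan is to recast \eqref{eq1} as a fixed point equation and invoke Schauder's fixed point theorem. First I would prove the equivalence: a function $y\in\mathcal{C}(\mathcal{J},\mathbb{R})$ solves \eqref{eq1} if and only if it is a fixed point of the operator $N:\mathcal{C}(\mathcal{J},\mathbb{R})\to\mathcal{C}(\mathcal{J},\mathbb{R})$ given by
$$
(Ny)(t):= {_{t_{0}}^{\mathbb{T}}I}_{t}^{\alpha}\bigl(f(\cdot,y(\cdot))\bigr)(t)
=\frac{1}{\Gamma(\alpha)}\int_{t_{0}}^{t}(t-s)^{\alpha-1}f(s,y(s))\,\Delta s .
$$
For the ``only if'' direction one applies ${_{t_{0}}^{\mathbb{T}}I}_{t}^{\alpha}$ to the equation and uses Theorem~\ref{th2} (which is applicable since $f(\cdot,y(\cdot))$ is rd-continuous); for the ``if'' direction one applies ${_{t_{0}}^{\mathbb{T}}D}_{t}^{\alpha}$ and uses Proposition~\ref{P4}, while the initial condition ${_{t_{0}}^{\mathbb{T}}I}_{t}^{1-\alpha}y(t_{0})=0$ comes for free from Theorem~\ref{th1}, because $Ny$ lies in ${_{t_{0}}^{\mathbb{T}}}I_{t}^{\alpha}(\mathcal{J})$. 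That $N$ is well-defined, i.e. maps continuous functions to continuous functions, follows from the rd-continuity and boundedness of $f$ together with the integrability of the kernel.

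Next I would check the hypotheses of Schauder's theorem on a closed ball. Set $R:=\dfrac{M a^{\alpha}}{\Gamma(\alpha+1)}$ and $B_{R}:=\{y\in\mathcal{C}(\mathcal{J},\mathbb{R}):\|y\|_{\infty}\le R\}$. For the self-mapping property, estimate
$$
|(Ny)(t)|\le\frac{M}{\Gamma(\alpha)}\int_{t_{0}}^{t}(t-s)^{\alpha-1}\,\Delta s ,
$$
and bound the $\Delta$-integral by the ordinary Lebesgue integral of the extension through Proposition~\ref{P1}, obtaining $\int_{t_{0}}^{t}(t-s)^{\alpha-1}\,\Delta s\le (t-t_{0})^{\alpha}/\alpha\le a^{\alpha}/\alpha$, hence $\|Ny\|_{\infty}\le R$. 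For relative compactness of $N(B_{R})$, I would prove equicontinuity: for $t_{0}\le\tau_{1}<\tau_{2}\le t_{0}+a$,
$$
|(Ny)(\tau_{2})-(Ny)(\tau_{1})|\le\frac{M}{\Gamma(\alpha)}\left(\int_{t_{0}}^{\tau_{1}}\bigl|(\tau_{2}-s)^{\alpha-1}-(\tau_{1}-s)^{\alpha-1}\bigr|\,\Delta s+\int_{\tau_{1}}^{\tau_{2}}(\tau_{2}-s)^{\alpha-1}\,\Delta s\right),
$$
and again dominate each $\Delta$-integral by its Lebesgue counterpart via Proposition~\ref{P1}, so the right-hand side is controlled by an expression of the form $\dfrac{M}{\Gamma(\alpha+1)}\bigl(2(\tau_{2}-\tau_{1})^{\alpha}+|(\tau_{2}-t_{0})^{\alpha}-(\tau_{1}-t_{0})^{\alpha}|\bigr)$, which goes to $0$ as $\tau_{2}-\tau_{1}\to0$ uniformly in $y\in B_{R}$. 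Since $B_{R}$ is bounded, Arzel\`a--Ascoli gives that $N(B_{R})$ is relatively compact in $\mathcal{C}(\mathcal{J},\mathbb{R})$.

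It remains to show that $N$ is continuous. If $y_{n}\to y$ in $\mathcal{C}(\mathcal{J},\mathbb{R})$, then rd-continuity of $f$ yields $f(s,y_{n}(s))\to f(s,y(s))$ pointwise, and the uniform bound $|f(s,y_{n}(s))|\le M$ lets me pass to the limit under the $\Delta$-integral by dominated convergence (again reducing to the Lebesgue setting via Proposition~\ref{P1}), so $\|Ny_{n}-Ny\|_{\infty}\to0$. With $N:B_{R}\to B_{R}$ continuous and $N(B_{R})$ relatively compact, Schauder's fixed point theorem produces a fixed point $y\in B_{R}$, which by the equivalence above is a solution of \eqref{eq1} on $\mathcal{J}$.

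The main obstacle will be the singular kernel $(t-s)^{\alpha-1}$ inside the $\Delta$-integral: Proposition~\ref{P1} requires an \emph{increasing continuous} integrand on a closed bounded time-scale interval, whereas $s\mapsto(t-s)^{\alpha-1}$, though increasing, is unbounded as $s\to t^{-}$. Reconciling this --- by a truncation/approximation of the kernel near the singularity and a monotone passage to the limit, or alternatively by invoking directly that the Riemann--Liouville $\Delta$-integral of a bounded rd-continuous function is continuous on $\mathbb{T}$ --- is the delicate technical point underpinning both the self-mapping bound and the equicontinuity estimate.
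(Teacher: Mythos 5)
Your proposal is correct and follows essentially the same route as the proof of this result in the cited source \cite{MyID:328} (the present paper only quotes it as Theorem~24 there): recast \eqref{eq1} as a fixed point problem for the fractional $\Delta$-integral operator, establish boundedness, equicontinuity and continuity with the $\Delta$-versus-Lebesgue comparison handling the singular kernel, and conclude by Arzel\`a--Ascoli plus a compactness-based fixed point theorem. The only cosmetic difference is that the source closes the argument with a Schaefer/Leray--Schauder-type alternative via an a priori bound on the solutions of $y=\lambda Ty$, whereas you apply Schauder directly on the invariant ball $B_R$; both are equally valid, and your flagged caveat about applying Proposition~\ref{P1} to the unbounded increasing kernel (resolved by truncation and passage to the limit) is exactly the delicate point implicit in the original proof as well.
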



\section{Main Results}

We begin by proving formulas of integration by parts 
in Section~\ref{sec:MR:01}, which are then used in
Section~\ref{sec:MR:02} to prove necessary optimality
conditions for nonlinear Riemann--Liouville fractional
optimal control problems (FOCPs) on time scales.


\subsection{Fractional integration by parts on time scales}
\label{sec:MR:01}

Our first result gives integration by parts formulas 
for fractional integrals and derivatives on time scales.
For the relation between integration on time scales 
and Lebesgue integration we refer the reader to
\cite{MR2206702}.

\begin{theorem}
\label{l1}
Let $\alpha > 0$, $p,q\geq 1$, and $\frac{1}{p}+ \frac{1}{q} \leq
1+\alpha$, where $p \ne 1 $ and $q \ne 1$ in the case when 
$\frac{1}{p}+\frac{1}{q} = 1+\alpha$. Moreover, let
\begin{equation*}
{_{a}^{\mathbb{T}}I}_{t}^{\alpha}(L_p)
:=\left\{f: f={_{a}^{\mathbb{T}}I}_{t}^{\alpha}g, \,
g\in L_p(a,b)\right\} 
\end{equation*}
and
\begin{equation*}
{_{t}^{\mathbb{T}}I}_{b}^{\alpha}(L_p)
:=\left\{f: f={_{t}^{\mathbb{T}}I}_{b}^{\alpha}g, \,
g\in L_p(a,b)\right\}.
\end{equation*}
The following integration by parts formulas hold.
\begin{itemize}
\item[(a)]\  
If $\varphi \in L_p(a,b)$ and  $\psi \in L_q(a,b)$, then
\begin{equation} 
\label{e1}
\int_{a}^{b} \varphi(t)\left({_{a}^{\mathbb{T}}I}_{t}^{\alpha}
\psi\right)(t)\Delta t =\int_{a}^{b}
\psi(t)\left({_{t}^{\mathbb{T}}I}_{b}^{\alpha} \varphi\right)(t)\Delta t.
\end{equation}
\item[(b)] \ 
If $g \in {_{t}^{\mathbb{T}}I}_{b}^{\alpha}(L_p)$ and $f \in
{_{a}^{\mathbb{T}}I}_{t}^{\alpha}(L_q)$, then
\begin{equation} 
\label{d1}
\int_{a}^{b}g(t) \left({_{a}^{\mathbb{T}}D}_{t}^{\alpha} f\right)(t)\Delta t 
= \int_{a}^{b} f(t) \left({_{t}^{\mathbb{T}}D}_{b}^{\alpha} g\right)(t)\Delta t.
\end{equation}
\item[(c)]\  For Caputo fractional derivatives,
if $g \in {_{t}^{\mathbb{T}}I}_{b}^{\alpha}(L_p)$ and 
$f \in {_{a}^{\mathbb{T}}I}_{t}^{\alpha}(L_q) $, then
\begin{equation*}
\int_{a}^{b} g(t) \left( 
{_{a}^{\mathbb{T}C}D}_{t}^{\alpha} f\right)(t)\Delta t
= \left[{_{t}^{\mathbb{T}}I}_{b}^{1-\alpha} g(t)\cdot f(t) \right]_{a}^{b}
+ \int_{a}^{b}f(\sigma(t)) \left({_{t}^{\mathbb{T}}D}_{b}^{\alpha} g\right)(t)
\Delta t
\end{equation*}
and
\begin{equation*}
\int_{a}^{b}g(t) \left( 
{_{t}^{\mathbb{T}C}D}_{b}^{\alpha} f\right)(t)\Delta t
= - \left[ {_{a}^{\mathbb{T}}I}_{t}^{1-\alpha} g(t)\cdot f(t) \right]_{a}^{b}
+ \int_{a}^{b}f(\sigma(t)) \left({_{a}^{\mathbb{T}}D}_{t}^{\alpha} g\right)(t)\Delta t.
\end{equation*}
\end{itemize}
\end{theorem}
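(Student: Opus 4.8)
The plan is to prove the three formulas in the order (a), (b), (c): part~(a) contains all the analytic substance, while (b) and (c) follow from it together with already-recorded properties of the time-scale fractional operators. For part~(a), I would expand both integrals in \eqref{e1} by Definition~\ref{def:FI}: the left-hand side becomes the iterated $\Delta$-integral of $\frac{1}{\Gamma(\alpha)}(t-s)^{\alpha-1}\varphi(t)\psi(s)$ over the triangle $\{(s,t)\colon a\le s\le t\le b\}$ with $s$ the inner and $t$ the outer variable, and the right-hand side is the integral of the same integrand over the same triangle with the order of the two integrations reversed. Hence the identity reduces to interchanging the two $\Delta$-integrations. To justify this, pass from $\Delta$-integration to Lebesgue integration against the measure $\mu_{\Delta}$ associated with the time scale (see \cite{MR2206702}), so that the iterated integrals become iterated integrals against the $\sigma$-finite product measure $\mu_{\Delta}\otimes\mu_{\Delta}$ on the (measurable) triangle, and apply the Fubini--Tonelli theorem. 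The hypotheses $p,q\ge 1$, $\frac{1}{p}+\frac{1}{q}\le 1+\alpha$ (with $p,q\ne 1$ in the limiting case) enter exactly here: by Hölder's inequality together with the standard bound for the mixed Lebesgue norm of the kernel $(t-s)^{\alpha-1}$ on the triangle, the double integral of the modulus of the integrand is dominated by a constant multiple of $\|\varphi\|_{L_p}\|\psi\|_{L_q}<\infty$, so Fubini's theorem applies and the interchange is legitimate; the Euclidean estimate transfers verbatim since a $\Delta$-integral is a Lebesgue integral.

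For part~(b), write $f={_{a}^{\mathbb{T}}I}_{t}^{\alpha}\phi$ with $\phi\in L_q(a,b)$ and $g={_{t}^{\mathbb{T}}I}_{b}^{\alpha}\chi$ with $\chi\in L_p(a,b)$, which is possible by the very definition of the spaces ${_{a}^{\mathbb{T}}I}_{t}^{\alpha}(L_q)$ and ${_{t}^{\mathbb{T}}I}_{b}^{\alpha}(L_p)$. By Proposition~\ref{P4}, ${_{a}^{\mathbb{T}}D}_{t}^{\alpha}f=\phi$, and by the right-sided counterpart of Proposition~\ref{P4}, ${_{t}^{\mathbb{T}}D}_{b}^{\alpha}g=\chi$. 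Therefore the left-hand side of \eqref{d1} equals $\int_{a}^{b}g(t)\,\phi(t)\,\Delta t=\int_{a}^{b}\bigl({_{t}^{\mathbb{T}}I}_{b}^{\alpha}\chi\bigr)(t)\,\phi(t)\,\Delta t$, which by part~(a), applied with $\varphi=\chi$ and $\psi=\phi$, equals $\int_{a}^{b}\chi(t)\,\bigl({_{a}^{\mathbb{T}}I}_{t}^{\alpha}\phi\bigr)(t)\,\Delta t=\int_{a}^{b}\chi(t)\,f(t)\,\Delta t$, and this last expression is precisely the right-hand side of \eqref{d1}.

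For part~(c), start from Definition~\ref{Ca}, so that ${_{a}^{\mathbb{T}C}D}_{t}^{\alpha}f={_{a}^{\mathbb{T}}I}_{t}^{1-\alpha}(f^{\Delta})$, and apply part~(a) with the exponent $1-\alpha$ in place of $\alpha$ to move the fractional integral from $f^{\Delta}$ onto $g$; this gives $\int_{a}^{b}g(t)\bigl({_{a}^{\mathbb{T}C}D}_{t}^{\alpha}f\bigr)(t)\,\Delta t=\int_{a}^{b}f^{\Delta}(t)\bigl({_{t}^{\mathbb{T}}I}_{b}^{1-\alpha}g\bigr)(t)\,\Delta t$. Now the classical time-scale integration-by-parts rule, $\int_{a}^{b}u^{\Delta}v\,\Delta t=\left[u\,v\right]_{a}^{b}-\int_{a}^{b}u^{\sigma}v^{\Delta}\,\Delta t$, with $u=f$ and $v={_{t}^{\mathbb{T}}I}_{b}^{1-\alpha}g$, produces the boundary term $\left[{_{t}^{\mathbb{T}}I}_{b}^{1-\alpha}g(t)\cdot f(t)\right]_{a}^{b}$ and the integral $-\int_{a}^{b}f(\sigma(t))\bigl({_{t}^{\mathbb{T}}I}_{b}^{1-\alpha}g\bigr)^{\Delta}(t)\,\Delta t$; since $\bigl({_{t}^{\mathbb{T}}I}_{b}^{1-\alpha}g\bigr)^{\Delta}=-\,{_{t}^{\mathbb{T}}D}_{b}^{\alpha}g$ by Definition~\ref{RL}, this last term becomes $+\int_{a}^{b}f(\sigma(t))\bigl({_{t}^{\mathbb{T}}D}_{b}^{\alpha}g\bigr)(t)\,\Delta t$, which is the first Caputo formula. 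The second is obtained in exactly the same way, starting instead from ${_{t}^{\mathbb{T}C}D}_{b}^{\alpha}f=-{_{t}^{\mathbb{T}}I}_{b}^{1-\alpha}(f^{\Delta})$, using part~(a) to pass to ${_{a}^{\mathbb{T}}I}_{t}^{1-\alpha}g$, integrating by parts, and invoking $\bigl({_{a}^{\mathbb{T}}I}_{t}^{1-\alpha}g\bigr)^{\Delta}={_{a}^{\mathbb{T}}D}_{t}^{\alpha}g$ from Definition~\ref{RL}.

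The step I expect to be the main obstacle is the rigorous justification of the interchange of $\Delta$-integrals in part~(a): one must fix the correct product measure on the time-scale square, confirm measurability of the triangular domain, and---because $(t-s)^{\alpha-1}$ has a weak singularity along the diagonal when $\alpha<1$---verify the dominating estimate precisely under the stated hypotheses on $p$, $q$ and $\alpha$ (this is where the exclusion of $p=1$ or $q=1$ in the boundary case $\frac{1}{p}+\frac{1}{q}=1+\alpha$ is forced). A secondary point requiring care is checking, at each application in (b) and (c), that the functions involved meet the integrability hypotheses of part~(a); everything else is routine, since (b) is a substitution into (a), and (c) is (a) followed by the ordinary product rule on time scales and Definition~\ref{RL}.
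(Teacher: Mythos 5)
Your argument is correct, and for parts (a) and (c) it follows essentially the same route as the paper: (a) is proved by expanding the fractional integral and interchanging the two $\Delta$-integrations (the paper simply cites \cite{MyID:328} for the interchange, whereas you justify it via the Lebesgue correspondence of \cite{MR2206702} and a H\"older--Fubini estimate, which is in fact the only place the hypotheses on $p$, $q$, $\alpha$ are used --- a point the paper leaves implicit); (c) is, as in the paper, part (a) at order $1-\alpha$ followed by ordinary time-scale integration by parts, with you making explicit the identities $\left({_{t}^{\mathbb{T}}I}_{b}^{1-\alpha}g\right)^{\Delta}=-\,{_{t}^{\mathbb{T}}D}_{b}^{\alpha}g$ and $\left({_{a}^{\mathbb{T}}I}_{t}^{1-\alpha}g\right)^{\Delta}={_{a}^{\mathbb{T}}D}_{t}^{\alpha}g$. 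Where you genuinely diverge is part (b): the paper expands ${_{a}^{\mathbb{T}}D}_{t}^{\alpha}f$ by Definition~\ref{RL} and then ``interchanges the order of integrals'' even though the integrand still carries an outer $\Delta$-derivative, so the interchange there is not a plain Fubini step and is left unjustified; you instead exploit the hypothesis $f\in{_{a}^{\mathbb{T}}I}_{t}^{\alpha}(L_q)$, $g\in{_{t}^{\mathbb{T}}I}_{b}^{\alpha}(L_p)$ to write $f={_{a}^{\mathbb{T}}I}_{t}^{\alpha}\phi$, $g={_{t}^{\mathbb{T}}I}_{b}^{\alpha}\chi$, invoke Proposition~\ref{P4} (and its right-sided analogue, which the paper does not state but which holds by the same argument) to replace the derivatives by $\phi$ and $\chi$, and then conclude by a single application of \eqref{e1}. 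Your route buys rigor and makes clear why the membership in the spaces ${_{a}^{\mathbb{T}}I}_{t}^{\alpha}(L_q)$, ${_{t}^{\mathbb{T}}I}_{b}^{\alpha}(L_p)$ is assumed at all, at the modest cost of needing the right-sided composition identity; the paper's route is shorter but rests on an interchange it does not explain. Both proofs share the same residual looseness about verifying the exponent conditions at each application of (a) in parts (b) and (c), which you at least flag explicitly.
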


\begin{proof}
(a) If $\varphi \in L_p(a,b)$ and  $\psi \in L_q(a,b)$,
then, from Definition~\ref{def:FI}, we get
$$
\int_{a}^{b} \varphi(t)\left({_{a}^{\mathbb{T}}I}_{t}^{\alpha}
\psi\right)(t)\Delta t =\int_{a}^{b} \varphi(t)\left(\int_{a}^{t}
\frac{(t-s)^{\alpha-1}}{\Gamma(\alpha)}\psi(s)\Delta s\right)\Delta t.
$$
Interchanging the order of integrals (see \cite{MyID:328}), we reach at
$$
\int_{a}^{b} \varphi(t)\left({_{a}^{\mathbb{T}}I}_{t}^{\alpha}
\psi\right)(t)\Delta t =\int_{a}^{b}
\psi(t)\left({_{t}^{\mathbb{T}}I}_{b}^{\alpha} \varphi\right)(t)\Delta t.
$$
(b) If $g \in {_{t}^{\mathbb{T}}I}_{b}^{\alpha}(L_p)$ and 
$f \in {_{a}^{\mathbb{T}}I}_{t}^{\alpha}(L_q)$, then, 
from Definition~\ref{RL}, we get
$$
\int_{a}^{b}g(t) \left( 
{_{a}^{\mathbb{T}}D}_{t}^{\alpha} f\right)(t)\Delta t
=\int_{a}^{b}g(t) \left(\frac{1}{\Gamma(1-\alpha)}\left(\int_{a}^{t}
(t-s)^{-\alpha}f(s)\Delta s\right)^{\Delta} \right)\Delta t.
$$
Interchanging the order of integrals, we obtain that
$$
\int_{a}^{b}g(t) \left( {_{a}^{\mathbb{T}}D}_{t}^{\alpha} f\right)(t)\Delta t
=\int_{a}^{b}f(t) \left({_{t}^{\mathbb{T}}D}_{b}^{\alpha} g\right)(t)\Delta t.
$$
(c) If $g \in {_{t}^{\mathbb{T}}I}_{b}^{\alpha}(L_p)$ 
and $f \in {_{a}^{\mathbb{T}}I}_{t}^{\alpha}(L_q)$, 
then, from Definition~\ref{Ca}, we get
$$
\int_{a}^{b}g(t) \left({_{a}^{\mathbb{T}C}D}_{t}^{\alpha} f\right)(t)
\Delta t=\int_{a}^{b}g(t) \left(\frac{1}{\Gamma(1-\alpha)}\int_{a}^{t}
(t-s)^{-\alpha}f^{\Delta}(s)\Delta s \right)\Delta t.
$$
Interchanging the order of the integrals, 
and by using integration by parts on time scales, 
we conclude that
$$
\int_{a}^{b}g(t) \left({_{a}^{\mathbb{T}C}D}_{t}^{\alpha} f\right)(t)\Delta t
= \int_{a}^{b}f(\sigma(t)) \left({_{t}^{\mathbb{T}}D}_{b}^{\alpha} g\right)(t)\Delta t
+\left[ {_{t}^{\mathbb{T}}I}_{b}^{1-\alpha} g(t)\cdot f(t)\right]_{a}^{b}.
$$
The second relation is obtained in a similar way.
\end{proof}


\subsection{Nonlinear Riemann--Liouville FOCPs on time scales}
\label{sec:MR:02}

Let $\mathbb{T}$ be a given time scale with $t_0, t_f \in \mathbb{T}$
and let us consider a control system given by the fractional  
differential equation
\begin{equation}
\label{4.1}
^{\mathbb{T}}_{\,t_0}D_{t}^{\alpha}x(t)=f(x(t),u(t),t),
\quad t\in {\mathbb T},
\end{equation}
subject to
\begin{equation}
\label{4.2}
^{\mathbb{T}}_{\, t_0}I_{t}^{1-\alpha}x(t_0)=x_0,
\end{equation}
where $x(t)\in \mathbb{R}^n$ and $u(t)\in \mathbb{R}^m$ 
are the state and control vectors, respectively, function
$f:\mathbb{R}^{n\times m\times 1}\rightarrow \mathbb{R}^{n}$
is a nonlinear vector function, and $x_0\in \mathbb{R}^n$ 
is the specified initial state vector. A similar problem 
is studied in \cite{MR2163462} for problems
involving AB derivatives in Caputo sense on continuous time scales. 
Here we study it within Riemann--Liouville derivatives on arbitrary 
time scales. In order to achieve a desired behavior in terms of
performance requirements, we select a cost index for
the dynamical system \eqref{4.1}--\eqref{4.2}. In selecting the
performance index, the designer attempts to define a mathematical
expression that, when minimized, indicates that the system is
performing in the most desirable manner. Thus, choosing a
performance cost index is a translation of system's physical requirements
into mathematical terms \cite{Bal.4}. For the fractional dynamic system
\eqref{4.1}--\eqref{4.2}, we choose the following performance index:
\begin{equation}
\label{4.3} 
J[x,u]=\int_{t_0}^{t_{f}} L(x(t),u(t),t) \Delta t\longrightarrow \min,
\end{equation} 
where  $L:\mathbb{R}^{n\times m\times 1}\to \mathbb{R}$ is a scalar function.
In the following, we derive a necessary optimality condition
corresponding to the considered fractional optimal control problem
\eqref{4.1}--\eqref{4.3}. Under given considerations, 
the following theorem holds true.

\begin{theorem}
\label{thm4.1}
(Necessary optimality conditions) 
Let $(x(\cdot), u(\cdot))$ be a minimizer of 
problem \eqref{4.1}--\eqref{4.3}. Then, there exists 
a function $\lambda(\cdot)$ for which the triplet 
$\left(x(\cdot), \lambda(\cdot), u(\cdot)\right)$ satisfies:
\begin{itemize}
\item[(i)] \ \emph{the Hamiltonian system}
\begin{equation}
\label{4.4}
\begin{cases}
^{\mathbb{T}}_{\,t_{0}}D_{t}^{\alpha}x(t)
=\displaystyle \frac{\partial \mathcal H}{\partial \lambda}\left(x(t), 
\lambda (t), u(t), t\right), & t\in {\mathbb T},\\[0.3cm]
^{\mathbb{T}}_{\,t}D_{t_f}^{\alpha}\lambda(t)
=\displaystyle \frac{\partial \mathcal H}{\partial x}\left(x(t), 
\lambda (t), u(t), t\right), & t\in {\mathbb T};
\end{cases}
\end{equation}
\item[(ii)] \ \emph{the stationary condition}
\begin{equation}
\label{4.6}
\frac{\partial \mathcal H}{\partial u}\left(x(t), \lambda (t), u(t), t\right)
=0,\quad t\in {\mathbb T},
\end{equation}
where ${\mathcal H}$ is a scalar function, called the \emph{Hamiltonian},
defined by
\begin{equation}
\label{4.7}
{\mathcal H}(x,\lambda, u, t)=L(x, u, t)
+\lambda^{T} f(x, u, t).
\end{equation}
\end{itemize}
\end{theorem}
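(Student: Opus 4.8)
The plan is to derive the optimality system by the classical Lagrange multiplier method adapted to the time-scale fractional setting, using the integration by parts formula \eqref{d1} from Theorem~\ref{l1}(b) as the essential tool. First I would adjoin the dynamic constraint \eqref{4.1} to the cost \eqref{4.3} by introducing a multiplier function $\lambda(\cdot)$ (to be identified with the adjoint/costate) and forming the augmented functional
\begin{equation*}
\tilde J[x,u,\lambda]=\int_{t_0}^{t_f}\Bigl[L(x(t),u(t),t)
+\lambda^T(t)\bigl(f(x(t),u(t),t)-{^{\mathbb{T}}_{\,t_0}D}_t^{\alpha}x(t)\bigr)\Bigr]\Delta t.
\end{equation*}
Recognizing the Hamiltonian \eqref{4.7}, this reads $\tilde J=\int_{t_0}^{t_f}\bigl[\mathcal H(x,\lambda,u,t)-\lambda^T\,{^{\mathbb{T}}_{\,t_0}D}_t^{\alpha}x\bigr]\Delta t$. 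Along a minimizer, every admissible variation must leave $\tilde J$ stationary, so I would compute the first variation with respect to independent perturbations $\delta x$, $\delta u$, $\delta\lambda$.

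The variation in $\delta\lambda$ immediately returns the state equation \eqref{4.1}, i.e. the first line of \eqref{4.4}, since $\partial\mathcal H/\partial\lambda=f$. The variation in $\delta u$ gives $\int_{t_0}^{t_f}(\partial\mathcal H/\partial u)^T\delta u\,\Delta t=0$ for all $\delta u$, hence the stationary condition \eqref{4.6}. The crucial step is the variation in $\delta x$: it produces
\begin{equation*}
\int_{t_0}^{t_f}\Bigl[\bigl(\tfrac{\partial\mathcal H}{\partial x}\bigr)^T\delta x
-\lambda^T\,{^{\mathbb{T}}_{\,t_0}D}_t^{\alpha}(\delta x)\Bigr]\Delta t=0,
\end{equation*}
and here I would invoke \eqref{d1} to transfer the left Riemann--Liouville delta derivative off $\delta x$ and onto $\lambda$, turning the second term into $\int_{t_0}^{t_f}(\delta x)^T\,{^{\mathbb{T}}_{\,t}D}_{t_f}^{\alpha}\lambda\,\Delta t$. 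Collecting terms and using the fundamental lemma of the calculus of variations on time scales (arbitrariness of $\delta x$) yields ${^{\mathbb{T}}_{\,t}D}_{t_f}^{\alpha}\lambda=\partial\mathcal H/\partial x$, which is the adjoint equation, the second line of \eqref{4.4}. This accounts for all three conditions in the statement.

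The main obstacle is the legitimacy of applying the integration by parts formula \eqref{d1} to the variation $\delta x$: formula \eqref{d1} requires $\delta x\in{^{\mathbb{T}}_{\,t_0}}I_t^{\alpha}(L_q)$ and $\lambda\in{^{\mathbb{T}}_{\,t}}I_{t_f}^{\alpha}(L_p)$ with the conjugate-exponent condition of Theorem~\ref{l1}, so I would need to restrict the class of admissible variations to those compatible with the initial condition \eqref{4.2} — in particular variations with ${^{\mathbb{T}}_{\,t_0}}I_t^{1-\alpha}(\delta x)(t_0)=0$, which by Theorem~\ref{th1} places $\delta x$ in the required image space — and to define $\lambda$ a posteriori as the solution of the adjoint problem, checking it lies in the dual space. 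A secondary subtlety is that no boundary terms appear in \eqref{d1} (unlike the Caputo case in Theorem~\ref{l1}(c)), which is exactly why the homogeneous initial condition \eqref{4.2} with $x_0$ interpreted through ${^{\mathbb{T}}_{\,t_0}}I_t^{1-\alpha}$ is the natural one and no transversality condition on $\lambda$ at $t_f$ is forced beyond what the adjoint dynamics themselves impose; I would remark on this point rather than belabor it.
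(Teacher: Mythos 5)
Your proposal follows essentially the same route as the paper: adjoin the dynamics via a Lagrange multiplier to form the augmented functional, take the first variation in $\delta x$, $\delta\lambda$, $\delta u$, apply the integration by parts formula \eqref{d1} to move the left Riemann--Liouville delta derivative from $\delta x$ onto $\lambda$, and conclude by the arbitrariness of the independent variations. Your additional remarks on the admissibility of $\delta x$ and $\lambda$ for \eqref{d1} (and the absence of boundary terms) go slightly beyond the paper's own, more formal, argument, but the core derivation is the same.
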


\begin{proof}
To deduce the necessary optimality conditions that the optimal pair
$\left(x(\cdot), u(\cdot)\right)$ must satisfy, we use the Lagrange 
multiplier technique to adjoin the dynamic constraint \eqref{4.1} 
to the performance index \eqref{4.3}. Thus, we form the augmented 
functional
\begin{equation}
\label{4.8}
J_{a}[x,\lambda,u]=\int_{t_0}^{t_{f}} \left[{\mathcal H}(x(t),\lambda (t), u(t),
t)-\lambda^{T}(t)^{\mathbb{T}}_{\,t_{0}}D_{t}^{\alpha}x(t)\right] \Delta t,
\end{equation}
where $\lambda(t)\in \mathbb{R}^{n}$ is the Lagrange multiplier, also
known as the costate or adjoint variable. Taking the first variation 
of the augmented performance index $J_{a}[x,\lambda,u]$ given by \eqref{4.8}, 
we obtain that
\begin{equation}
\label{4.9}
\begin{split} 
\delta J_{a}[x,\lambda,u]
&=\int_{t_{0}}^{t_f}\Biggl\{\left[\frac{\partial
{\mathcal H}}{\partial x}\right]^{T}\delta x(t)
+\left[\frac{\partial {\mathcal H}}{\partial
\lambda}-{^{\mathbb{T}}_{\,t_{0}}}D_{t}^{\alpha}x(t)\right]^{T}\delta\lambda(t)\\
&\qquad +\biggl[\frac{\partial {\mathcal H}}{\partial u}\biggr]^{T}\delta u(t)
- \lambda^{T}(t)^{\mathbb{T}}_{\,t_{0}}D_{t}^{\alpha}\delta x(t)\Biggr\}\Delta t.
\end{split}
\end{equation}
Using the fractional integration by parts formula \eqref{d1}, 
the last integral in \eqref{4.9} can be written as
\begin{equation}
\label{4.10}
\int_{t_{0}}^{t_f}\lambda^{T}(t)\,
^{\mathbb{T}}_{\,t_{0}}D_{t}^{\alpha}\delta x(t)\Delta
t=\int_{t_{0}}^{t_f}\biggl(\,
^{\mathbb{T}}_{\,t}D_{t_f}^{\alpha}\lambda(t)\biggr)^T\delta
x(t)\Delta t.
\end{equation}
Using \eqref{4.10} in \eqref{4.9}, we deduce that
\begin{equation}
\label{4.11}
\begin{split} \delta J_{a}[x,\lambda,u]
&=\int_{t_{0}}^{t_f}\Biggl\{\left[\frac{\partial
{\mathcal H}}{\partial x}-\,
^{\mathbb{T}}_{\,t}D_{t_f}^{\alpha}\lambda(t)\right]^{T}\delta x(t)
+\left[\frac{\partial {\mathcal H}}{\partial
\lambda}-{^{\mathbb{T}}_{\,t_{0}}}D_{t}^{\alpha}x(t)\right]^{T}\delta\lambda(t)\\
&\qquad +\left[\frac{\partial {\mathcal H}}{\partial u}\right]^{T}\delta u(t)\Biggr\}\Delta t.
\end{split}
\end{equation}
The necessary condition for an extremal asserts that the first variation
of $J_{a}[x,\lambda,u]$ must vanish along the extremal for all independent
variations $\delta x(t)$, $\delta\lambda(t)$, and $\delta u(t)$. Because
of this, all factors multiplying a variation in Eq. \eqref{4.11}
must vanish. We obtain conditions \eqref{4.4}--\eqref{4.6}. 
\end{proof}

Equations \eqref{4.4}--\eqref{4.6} represent the Euler--Lagrange 
equations of the FOCP \eqref{4.1}--\eqref{4.3}. Note that
Theorem~\ref{thm4.1} covers fractional optimal control problems
defined on isolated time scales with a non-constant graininess, 
as well as variational problems on time scales that are partially 
continuous and partially discrete, i.e., on hybrid time scales.


\subsection{An illustrative example}
\label{sec:ex}

Let $\mathbb{T}$ be a time scale with $0, T \in \mathbb{T}$. 
Consider the control system 
\begin{equation}
\label{4.12}
^{\mathbb{T}}_{\,0}D_{t}^{\alpha}x(t)= u(t),
\quad t\in [0,T]_\mathbb{T},
\end{equation}
subject to the initial condition
\begin{equation}
\label{4.13}
^{\mathbb{T}}_{0}I_{t}^{1-\alpha}x(0)=x_0,
\end{equation}
where the control $u$ belongs to $L^{2}$. 
Consider the problem of minimizing
$$
J[x,u]=\frac{1}{2} \left( ||x-z||_{L^2}^{2}+N||u||_{L^2}^{2} \right) 
$$
subject to \eqref{4.12}--\eqref{4.13}, 
where $z\in L^2$ and $N>0$ are fixed/given. 
In agreement with Theorem~\ref{thm4.1}, the optimal control
$u$ is characterized by \eqref{4.12}--\eqref{4.13} with
the adjoint system
$$
^{\mathbb{T}}_{\,t}D_{T}^{\alpha}\lambda(t)=x(t)-z(t),
\quad t\in \, [0,T]_\mathbb{T},
$$
and with the optimality condition
$$
u(t)=-\frac{\lambda(t)}{N}.
$$


\section{Conclusion}
\label{sec:conc}

We studied optimal control problems for fractional initial values 
systems involving fractional-time derivatives on time scales. 
As a main result, a necessary optimality condition is proved. 
In the formulation of the optimal control problem, 
the control $u$ takes values in $\mathbb{R}^m$. As future work,
it would be interesting to consider the case where the control 
takes values on a closed subset of $\mathbb{R}^m$. This question 
is far from being trivial \cite{MR3740678,MR3669665}
and needs further developments. 


\begin{acknowledgement}
Torres has been partially supported by
\emph{Funda\c{c}\~{a}o para a Ci\^{e}ncia e a Tecnologia} (FCT)
through CIDMA, project UID/MAT/04106/2019. The authors are
grateful to two anonymous referees for several pertinent
questions and comments.
\end{acknowledgement}



\end{document}